\newcommand{\HilH}{\mathcal{H}}
\newcommand{\realR}{\mathbb{R}}
\newcommand{\compC}{\mathbb{C}}
\newcommand{\resp}{resp.\@}
\newcommand{\eg}{e.g.}
\newcommand{\vect}[1][t]{\mathbf{#1}}
\newcommand{\lHopital}{l'H\^{o}pital}
\newcommand{\R}{\mathbb{R}}
\newcommand{\N}{\mathbb{N}}
\newcommand{\m}{\mathbf{m}}
\newcommand{\ga}{\mathbf{\Gamma}}
\newcommand{\bfGamma}{\mathbf{\Gamma}}
\newcommand{\Andreief}{Andr\'{e}ief}
\DeclareMathOperator{\Tr}{Tr}
\newtheorem{thm}{Theorem}[section]
\newtheorem{prop}{Proposition}[section]
\theoremstyle{definition}
\theoremstyle{remark}
\newtheorem{rmk}{Remark}[section]
\newcommand{\ddd}{d}
\newcommand{\fE}{\mathfrak{E}}
\title{On a relationship between high rank cases and rank one cases of Hermitian random matrix models with external source}
\author{Jinho Baik\thanks{Department of Mathematics, University of Michigan, Ann Arbor, MI, 48109, USA \newline
email: \texttt{baik@umich.edu}} \ and 
Dong Wang\thanks{Department of Mathematics, University of Michigan, Ann Arbor, MI, 48109, USA \newline
email: \texttt{dowang@umich.edu}}}
\date{\today}
\begin{document}

\maketitle

%%%	We may remove the table of contents later 

\begin{abstract} 
We prove an identity on Hermitian random matrix models with external source relating the high rank cases to the rank $1$ cases. 
This identity was proved and used in a previous paper of ours to study the  asymptotics of the top eigenvalues. %high rank cases of the spiked Hermitian matrix models with external source.
In this paper, we give an alternative, more conceptual proof of this identity based on a connection between the Hermitian matrix models with external source and the discrete KP hierarchy. 
This connection is obtained using the vertex operator method of Adler and van Moerbeke.
The desired identity then follows from the Fay-like identity of the discrete KP $\tau$ vector.
%In a previous work, we had studied the asymptotics of the  spiked Hermitian matrix models with external source of high rank cases using an identity relating them to the rank $1$ cases. In this paper we give an alternative, more conceptual proof of this identity based on a connection to the discrete KP hierarchy. 
%This connection is made using the vertex operator method of Adler and van Moerbeke.
%The desired identity then follows from the Fay-like identity of the discrete KP $\tau$ vector.
\end{abstract}

\maketitle

\section{Introduction} \label{section:introduction}

The subject of this paper is an identity between a Hermitian random matrix model with external source of rank $\m$ and $\m$ such models with external source of rank $1$. 
This identity allows us to reduce the asymptotic study of ``spiked Hermitian random matrix models'' of rank higher than $1$ to that of the models of rank $1$. 
This reduction formula was used in \cite{Baik-Wang10} to evaluate the limiting fluctuations of the top eigenvalue(s) of spiked models of arbitrary fixed rank with a general class of potentials. 
In \cite{Baik-Wang10} we gave a direct proof of this identity using the formula of \cite{Baik09} on the Christoffel-Darboux kernel. 
Here we give a different, more conceptual proof using the relation between the random matrix model with external source and discrete KP hierarchy. 
We show how the general results of Adler and van Moerbeke \cite{Adler-van_Moerbeke99a} 
on the construction of solutions of discrete KP hierarchy can be used on the partition functions of the Hermitian matrix model with external source.
% (external source model for short below).
%\marginpar{The word `external source model' was used only once later. I removed this definition.}
%\DongCom{Why stress ``infinite support'' of the weight function $W(x)$?}

\bigskip

We now introduce the model. Let $W(x)$ be a piecewise-continuous function on $\realR$. 
We assume that $W(x)$ is nonnegative, has infinite support, and vanishes sufficiently fast as $|x|\to \infty$  so that~\eqref{eq:formula_of_general_partition_function} converges.
%), and $W(x)$ is nonnegative unless otherwise stated. \marginpar{Q: Do we use a case when $W(x)$ is not nonnegative? If not, we should remove `unless otherwise.}
Let $A$ be a $\ddd\times \ddd$ Hermitian matrix with eigenvalues $a_1, \cdots, a_\ddd$. 
We call $A$ the external source matrix and $W(x)$ the weight function. 
%= \diag(a_1, \cdots, a_\ddd)$ be a diagonal matrix. 
Consider the following measure on the set $\HilH_\ddd$ of 
$\ddd \times \ddd$ Hermitian matrix $M$: 
\begin{equation} \label{eq:unscaled_external_source_model}
	P(M)dM := \frac{1}{Z_{d}(a_1, \cdots, a_\ddd)} \det(W(M)) e^{\Tr(AM)}dM,
	%P_{W,\ddd; a_1, \cdots, a_\ddd} (M)dM := \frac{1}{Z_{W,d}(a_1, \cdots, a_\ddd)} \det(W(M)) e^{\Tr(AM)}dM,
\end{equation}
where $W(M)$ is defined in terms of a functional calculus and %$Z_{d}$ is the partition function 
\begin{equation} \label{eq:formula_of_general_partition_function0}
\begin{split}
	%Z_{d}= &
	Z_{d}(a_1, \cdots, a_\ddd) := \int_{M \in \HilH_\ddd} \det(W(M)) e^{\Tr(AM)}dM 
\end{split}
\end{equation} 
is the partition function.
Note that the partition function %defined in~\eqref{eq:formula_of_general_partition_function0} 
depends only on the eigenvalues of $A$, but not its eigenvectors. 
It is also symmetric  in $a_1, \cdots, a_d$. The Harish-Chandra-Itzykson-Zuber integral formula \cite{Harish-Chandra57}, \cite{Itzykson-Zuber80} implies that 
\begin{multline} \label{eq:formula_of_general_partition_function}
 	Z_{d}(a_1, \cdots, a_\ddd) \\
  	= \frac{C_{\ddd}}{\displaystyle\prod_{1\le j<k\le \ddd} (a_k - a_j)} \int_{\R^\ddd}
  \det\big[ e^{a_j\lambda_k} \big]_{j,k=1}^{\ddd} 
  \prod_{1\le j<k\le \ddd} (\lambda_k-\lambda_j)  \prod_{j=1}^\ddd W(\lambda_j) d\lambda_j
\end{multline} 
for a constant $C_{\ddd}$ which  depends only on $d$. %, but not on $W$ and $a_1, \cdots, a_\ddd$. 
There is a similar formula for the density function of the eigenvalues. % has a similar formula. 

If some of eigenvalues of $A$ are zeros, we use the following short-hand notations: For $\m\le \ddd$, 
\begin{equation} \label{eq:short_hand_for_partition_function}
\begin{split}
  	Z_{\ddd}(a_1, \cdots, a_\m) := {}& Z_{\ddd}(a_1, \cdots, a_\m, \underbrace{0, \cdots, 0}_{d-\m}), \\
	Z_{\ddd} := {}& Z_{\ddd} (\underbrace{0, \cdots, 0}_{d}).
%  P_{W,\ddd; a_1, \cdots, a_\m} (M) := P_{W,\ddd; a_1, \cdots, a_\m, 0, \cdots, 0} (M), \quad P_{W,\ddd} (M) := P_{W,\ddd; 0, \cdots, 0} (M).
\end{split}
\end{equation}

The main theorem of this paper is about the following expectations. 
For $E\subset \realR$, $s \in \compC$, and $\m \leq \ddd$, we define
\begin{equation} \label{eq:defn_of_E_a_dots_E_s}
  \begin{split}
    \fE_\ddd(a_1, \cdots, a_\m; E; s) := 
    {}& \mathbb{E} \bigg[  \prod^{\ddd}_{j=1} (1 - s\chi_E(\lambda_j))\bigg] \\
    = {}& \int_{\HilH^{\ddd}} \prod^{\ddd}_{j=1} (1 - s\chi_E(\lambda_j)) P(M) dM, 
  \end{split}
\end{equation}
where $\lambda_1, \cdots, \lambda_{\ddd}$ are eigenvalues of $M$ and the expectation is with respect to the measure~\eqref{eq:unscaled_external_source_model} when the eigenvalues of the external source matrix $A$ are $a_1, \cdots, a_\m, \underbrace{0, \cdots, 0}_{d-\m}$.
We note that the last integral is the partition function with new weight $(1-s\chi_E(x))W(x)$ divided by the original partition function with the weight $W(x)$. This observation will be used in the proof later.
%The expectation $\fE_\ddd(a_1, \cdots, a_\m; E; s)$ can be defined as the ration of two partition functions, see \eqref{eq:ratio_expression_of_E_W_s}. 
When $s=1$ the above expectation is a gap probability. 
Set
\begin{equation} \label{eq:defn_of_bar_E_a_dots_E_s}
  \bar{\fE}_\ddd(a_1, \cdots, a_\m; E; s) := \frac{\fE_\ddd(a_1, \cdots, a_\m; E; s)}{\fE_\ddd( E; s )}.
\end{equation}

%To state the main theorem, we  introduce the orthonormal polynomials $p_j(x)$ 
Let $p_j(x)$ be orthonormal polynomials 
with respect to $W(x)dx$. 
For a real number $a$, define the constant
\begin{equation}\label{eq:gatam}
  \ga_j(a):= \int_{\R} p_j (s) e^{a s} W(s) ds. %= \ga_{n-j-\m+1}(\aaa_k),
\end{equation}

The main result is
\begin{thm}\label{thm:algE}
We have
\begin{equation} \label{eq:algE}
  \bar{\fE}_\ddd(a_1, \cdots,a_\m; E; s) = \frac{ \det \big[ \ga_{\ddd-j}(a_k) \bar{\fE}_{\ddd-j+1}(a_k; E; s) \big]_{j,k=1}^\m }{\det[\ga_{\ddd-j}(a_k)]_{j,k=1}^\m}.
\end{equation}
if $a_1, \cdots, a_\m$ are distinct and non-zero. 
\end{thm}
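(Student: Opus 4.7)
My plan is to recast the identity as a factorized formula for partition functions, then derive that formula from Adler--van Moerbeke's vertex operator construction of discrete KP tau vectors. Set $W^{(s)}(x) := (1-s\chi_E(x))W(x)$ and let $Z_\ddd^{(s)}(\cdot)$ be the partition function with weight $W^{(s)}$ in place of $W$. By the observation made after \eqref{eq:defn_of_E_a_dots_E_s},
\begin{equation*}
\bar{\fE}_\ddd(a_1,\cdots,a_\m;E;s) \;=\; \frac{Z_\ddd^{(s)}(a_1,\cdots,a_\m)/Z_\ddd^{(s)}}{Z_\ddd(a_1,\cdots,a_\m)/Z_\ddd}.
\end{equation*}
I claim that the theorem reduces to the single universal identity
\begin{equation*}
\frac{Z_\ddd(a_1,\cdots,a_\m)}{Z_\ddd} \;=\; C_{\ddd,\m}\cdot\frac{\det[\ga_{\ddd-j}(a_k)]_{j,k=1}^\m}{\prod_{1\le j<k\le \m}(a_k-a_j)},
\end{equation*}
where the constant $C_{\ddd,\m}$ depends only on $\ddd$ and $\m$, not on the weight. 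Indeed, applying this identity to both $W$ and $W^{(s)}$ and dividing shows
\begin{equation*}
\bar{\fE}_\ddd(a_1,\cdots,a_\m;E;s) \;=\; \frac{\det[\ga_{\ddd-j}^{(s)}(a_k)]_{j,k=1}^\m}{\det[\ga_{\ddd-j}(a_k)]_{j,k=1}^\m},
\end{equation*}
where $\ga_j^{(s)}(a)$ is the $W^{(s)}$-analog of $\ga_j(a)$. Specializing the universal identity itself to $\m=1$ with $\ddd$ replaced by $\ddd-j+1$ gives $\bar{\fE}_{\ddd-j+1}(a_k;E;s)=\ga_{\ddd-j}^{(s)}(a_k)/\ga_{\ddd-j}(a_k)$, so that $\ga_{\ddd-j}(a_k)\bar{\fE}_{\ddd-j+1}(a_k;E;s)=\ga_{\ddd-j}^{(s)}(a_k)$, and \eqref{eq:algE} follows.

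To prove the universal identity above, I would realize its left-hand side as a value of a discrete KP tau vector. Following Adler--van Moerbeke, deform the weight to $W_t(x):=W(x)\exp(\sum_{i\ge 1}t_ix^i)$ and set
\begin{equation*}
\tau_n(t) \;:=\; \int_{\HilH_n}\det(W_t(M))\,dM,\qquad n\ge 0,
\end{equation*}
so that $\{\tau_n(t)\}_{n\ge 0}$ is a tau vector for the discrete KP hierarchy. The Adler--van Moerbeke vertex operators
\begin{equation*}
X(a) \;:=\; e^{\sum_{i\ge 1}a^it_i}\,e^{-\sum_{i\ge 1}(ia^i)^{-1}\partial_{t_i}}
\end{equation*}
take the tau vector to a new tau vector, and the classical Fay-like identity for discrete KP tau vectors (an iterated Hirota bilinear identity) expresses the $\m$-fold vertex operator action as a determinant of single vertex operator actions at shifted levels, divided by the Vandermonde $\prod_{1\le j<k\le \m}(a_k-a_j)$. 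It then remains to identify $[X(a_1)\cdots X(a_\m)\tau_\ddd](0)/\tau_\ddd(0)$ with $Z_\ddd(a_1,\cdots,a_\m)/Z_\ddd$, and $[X(a)\tau_n](0)/\tau_n(0)$ with $\ga_{n-1}(a)$ up to a normalization depending only on $n$; this uses that $X(a)$ acting at $t=0$ coincides with the insertion of a single external-source eigenvalue $a$ in the HCIZ integral.

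The main technical hurdle will be setting up the dictionary between Adler--van Moerbeke's framework, where the source is encoded in the infinite family of times $t_i$ dual to $\Tr M^i$, and the present setting, where the source is $e^{\Tr(AM)}$ for a finite-rank matrix $A$. The bridge is the HCIZ representation \eqref{eq:formula_of_general_partition_function}, from which the $\m$-fold vertex operator structure emerges after taking the confluent limit $a_{\m+1},\cdots,a_\ddd\to 0$; this limit replaces exponential rows in the HCIZ determinant by Wronskian rows of monomials and produces combinatorial factorial factors, and tracking these factors is precisely what pins down the universal constant $C_{\ddd,\m}$. A secondary bookkeeping issue is matching the shift $n\mapsto n-j+1$ inside the Fay determinant with the indexing $\ddd-j+1$ on the rank-$1$ factors on the right-hand side of \eqref{eq:algE}. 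Once these identifications are in place, the Fay identity of Adler--van Moerbeke delivers the universal identity, and hence the theorem.
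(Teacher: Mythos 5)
Your overall architecture matches the paper's: reduce $\bar{\fE}_\ddd$ to a ratio of partition-function ratios, prove a determinantal (Fay-like) identity expressing $Z_\ddd(a_1,\cdots,a_\m)/Z_\ddd$ through rank-one data via the Adler--van Moerbeke discrete KP machinery, and convert the rank-one data into the $\ga_{\ddd-j}(a_k)$ using orthonormal polynomials. However, there are two genuine gaps. First, your ``universal identity'' is false as stated. The correct identity is
\begin{equation*}
\frac{Z_\ddd(a_1,\cdots,a_\m)}{Z_\ddd}=\frac{1}{\Delta_\m(a_1,\cdots,a_\m)}\det\Big[\,a_k^{\m-j}\,\frac{Z_{\ddd+1-j}(a_k)}{Z_{\ddd+1-j}}\,\Big]_{j,k=1}^{\m},
\end{equation*}
and since $Z_l(a)=l!\,C_l'\,a^{-l+1}\ga_{l-1}(a)$ with $C_l'$ depending on $W$ (it involves the leading coefficients of the $p_j$), the entries equal $\kappa_{\ddd-j}(W)\,a_k^{\m-\ddd}\,\ga_{\ddd-j}(a_k)$ with row constants $\kappa_{\ddd-j}(W)$ that are \emph{not} weight-independent: already for $\ddd=\m=1$ one finds $Z_1(a)/Z_1=(\int W)^{-1/2}\ga_0(a)$, and $\int W^{(s)}\neq\int W$. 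Consequently your intermediate claim $\bar{\fE}_\ddd=\det[\ga^{(s)}_{\ddd-j}(a_k)]/\det[\ga_{\ddd-j}(a_k)]$ is also false. Your final formula nevertheless comes out right only because the same unjustified normalization is applied again in the rank-one specialization, so the two errors cancel; the clean fix is to keep the entries $a_k^{\m-j}Z_{\ddd+1-j}(a_k)$ in \emph{both} the numerator and denominator determinants, so that the weight-dependent row constants and the column factors $a_k^{\m-\ddd}$ cancel in the ratio --- which is what the paper does.

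Second, and more seriously, your KP dictionary is misidentified. The deformation $\tau_n(\vect)=\int\det\big(W(M)e^{\sum_i t_iM^i}\big)dM$ is indeed a discrete KP tau vector, but its Miwa shift $\vect\mapsto\vect+[a]$ inserts $\det(I-aM)^{-1}$, not the external-source factor; for $n=1$ it gives $\int(1-ax)^{-1}W(x)\,dx$ rather than $\int e^{ax}W(x)\,dx$. The tau vector that encodes the external-source model is the Borel-transformed series $\hat Z_d(\vect)$ of \eqref{eq:symmetric_formula_of_Z_m}, with the $1/\prod_k j_k!$ factors, and proving that \emph{it} is a discrete KP tau vector is genuine work (the paper constructs the Adler--van Moerbeke measures $\nu_d$ explicitly). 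Moreover, the vertex operator $X(a)=e^{\sum_i a^it_i}e^{-\sum_i(ia^i)^{-1}\partial_{t_i}}$ does not insert an external-source eigenvalue: it shifts $\vect\mapsto\vect-[a^{-1}]$, and in the Adler--van Moerbeke construction its integrated version raises the lattice index $d\to d+1$ (adds a row to the matrix integral). In particular $[X(a)\tau_n](0)/\tau_n(0)$ is not $\ga_{n-1}(a)$ up to an $n$-dependent constant; for your $\tau_1$ it equals $(M_0-M_1/a)/M_0$. The parameters $a_k$ must enter as Miwa shifts of the \emph{argument}, and the determinantal identity you need is the Fay-like identity \eqref{eq:Fayid}, which is obtained from the Hirota bilinear identity by specializing $\tilde{\vect}=\vect+[a]+[b]$ and then inducting on $\m$ with the Desnanot--Jacobi identity, not by iterating $X(a_1)\cdots X(a_\m)$.
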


Since both sides of~\eqref{eq:algE} are analytic in $a_j$, the above identity still holds when some of $a_j$ are the same or equal to zero if we interpret the right-hand side using \lHopital's rule. 

The term $\bar{\fE}_{d-j+1}(a_k;E;s)$ on the right-hand side of~\eqref{eq:algE} 
is given by \eqref{eq:defn_of_bar_E_a_dots_E_s} with the rank $\m=1$ and 
the only non-zero eigenvalue of the external source $A$ being $a_k$, 
and the dimension is changed to $d-j+1$. 
Hence the identity~\eqref{eq:algE}  relates the rank $\m$ case to $\m$ rank $1$ cases. 

In \cite{Baik-Wang10a}, the asymptotics $\bar{\fE}_{d-j+1}(a_k;E;s)$, the rank $1$ cases, were obtained. 
The quantities $\bfGamma_{d-j}(a_k)$ were also analyzed asymptotically in the same paper as an intermediate step. 
In the subsequent paper \cite{Baik-Wang10} the higher rank cases were then analyzed asymptotically using the identity~\eqref{eq:algE}.
The main result was that when the potential, assuming that it is real analytic,  is convex to the right of the right-end point of the support of the equilibrium measure, the phase transition behavior of the fluctuations of the top eigenvalues is same as in the Gaussian unitary ensemble. 
Otherwise, new types of jump transitions are possible to occur. 
A characterization of possible new transitions was also obtained. 

\subsubsection*{Acknowledgments}
The work of Jinho Baik was supported in part by NSF grants DMS1068646.

\section{Proof}

Set $\Delta(x) :=  \prod_{1\le j<k\le \ddd} (x_k-x_j)=\det(x_j^{k-1})_{j,k=1}^d$ for $x=(x_1, \cdots, x_\ddd)$.
Recall that, setting $a=(a_1, \cdots, a_\ddd)$ and $\lambda=(\lambda_1, \cdots, \lambda_\ddd)$, 
\begin{equation}\label{eq:HCSchur}
	\frac{\det\big[ e^{a_j\lambda_k} \big]_{j,k=1}^{\ddd}}{\Delta(a) \Delta(\lambda)}
	= \sum_{\ell(\kappa)\le d} \frac{s_{\kappa}(\lambda) s_{\kappa}(a) }{\prod_{q=1}^{\ddd} (\kappa_q+d-q)!} 
\end{equation}
where the sum is over all partitions $\kappa$ with at most $d$ parts.
Here $s_\kappa$ denotes the Schur polynomial and $\ell(\kappa)$ denotes the number of parts of partition $\kappa$. 
This identity can be proved as follows. First, from the \Andreief's identity \cite{Andreief86} (equivalently the Cauchy-Binnet formula),
\begin{equation}\label{eq:HCSchur-01}
	\det\big[ e^{a_j\lambda_k} \big]_{j,k=1}^{\ddd}
	= \det \bigg[ \sum_{n=0}^\infty \frac{a_j^n\lambda_k^n}{n!}  \bigg]
	= \frac1{d!} \sum_{n_1, \cdots, n_d=0}^\infty 
	\frac{ \det\big[ a_j^{n_q}\big] \det \big[ \lambda_j^{n_q} \big] }{
	\prod_{q=1}^d n_q!}.
\end{equation}
Since the summand is symmetric in $n_q$'s and equals to zero when two of the summation indices are the same, 
we can replace $\frac1{d!} \displaystyle\sum_{n_1, \cdots, n_d=0}^\infty $
by $\displaystyle\sum_{0\le n_d<\cdots< n_2<n_1}$.
Now set $\kappa_q:= n_q-d+q$. Then the summation condition becomes $\kappa_1\ge \cdots\ge \kappa_d\ge 0$, i.e all partitions with at most $d$ parts. 
The identity~\eqref{eq:HCSchur} follows by recalling the classical definition of the Schur polynomial $s_{\kappa}(a)= \frac{\det(a_j^{\kappa_q+\ddd-q})}{\Delta(\lambda)}$.

Inserting~\eqref{eq:HCSchur} into~\eqref{eq:formula_of_general_partition_function}, 
we obtain the Schur polynomial expansion of the partition function:
\begin{equation}\label{eq:ZdCjkappasG}
	Z_d(z_1, \cdots, a_\ddd) 
	= C_d \sum_{\ell(\kappa)\le d} G_{\kappa} 
	\frac{s_{\kappa}(a) }{\prod_{q=1}^{\ddd} (\kappa_q+d-q)!} 
\end{equation}
where
\begin{equation}
	G_{\kappa}
	:= \int_{\R^\ddd} s_{\kappa}(\lambda) \Delta(\lambda)^2 \prod_{j=1}^{\ddd} W(\lambda_j)d\lambda_j.
\end{equation}
Using the classical definition of the Schur function, the determinantal form of $\Delta(\lambda)$, and the \Andreief's identity \cite{Andreief86}, we obtain 
\begin{equation}\label{eq:Gkappa}
	G_{\kappa}
	= d!\cdot \det\big[ M_{\kappa_p+\ddd-p+q-1} \big]_{p,q=1}^{\ddd}, 
	\qquad M_j := \int_{\realR} x^jW(x) dx.
\end{equation}
Here $M_j$ are the moments of the measure $W(x)dx$.

We insert~\eqref{eq:Gkappa} into~\eqref{eq:ZdCjkappasG} and use the Jacobi-Trudi identity, $s_{\kappa}(a)= \det\big[ h_{\kappa_p-p+q}(a)]_{p,q=1}^{\ddd}$, where $h_j(a)$ denotes the complete symmetric function. 
The sum is over the partitions $\kappa=(\kappa_1, \cdots, \kappa_\ddd)$ where $\kappa_1\ge \cdots \ge \kappa_\ddd\ge 0$. 
We set $j_p:= \kappa_p+\ddd-p$. Then $j_1>\cdots> j_\ddd\ge 0$. 
Since the summand is symmetric in $j_p$'s and vanishes when two indices are the same, we arrive at the formula 
\begin{equation}\label{eq:ZdCjkappasG11}
	Z_d(z_1, \cdots, a_\ddd) 
	= C_d \sum_{j_1, \cdots, j_\ddd=0}^\infty \frac{ \det\big[ M_{j_p+q-1} \big]_{p,q=1}^{\ddd}  \det\big[ h_{j_p-d+q}(a)]_{p,q=1}^{\ddd}}{\prod_{q=1}^{\ddd} j_q!}. 
\end{equation}

\bigskip

In the below, the notation  $\vect = (t_1, t_2, \cdots)$ denotes a sequence of variables.
We also use the notation $[c] = (c, \frac{c^2}2, \frac{c^3}3, \cdots)$ for the evaluation of $\vect$ by powers of $c$. 
The notation $[c_1]+[c_2]+ \cdots +[c_m]$ stands for the evaluation of $\vect$ obtained by substituting 
$t_j=\sum^{m}_{i=1} c^j_i/j$.

Following \cite[Definition 6.1]{Kac-Raina87}, we define the so-called ``elementary Schur polynomials'' $h_j(\vect)$ by the generating function
\begin{equation} \label{eq:definition_of_elementary_Schur}
	 \sum^{\infty}_{j=-\infty} h_j(\vect) w^j = e^{\sum^{\infty}_{j=1} t_jw^j}.
\end{equation}
%Note that $h_j(\vect) = 0$ for $j < 0$. 
If $\vect=[a_1]+[a_2]+ \cdots +[a_d]$ (i.e. $t_j = \sum^d_{i=1} a^j_i/j$), 
$h_j(\vect)$ is the complete symmetric function in $a_1, \cdots, a_d$, which we denoted earlier by $h_j(a)$. 
This abuse of notations is unfortunate but in the below we only use the definition $h(\vect)$ given in~\eqref{eq:definition_of_elementary_Schur}.

Now define the formal power series in $\vect = (t_1, t_2, \cdots)$ 
\begin{equation} \label{eq:symmetric_formula_of_Z_m}
	\hat{Z}_{d}(\vect) := \frac{1}{d!\hat{C}_d} \sum^{\infty}_{j_1, \cdots, j_d = 0} %\prod^n_{k=1} \frac{1}{j_k!} %\\
%\times
\frac{\det [ M_{j_p+q-1} ]^d_{p,q = 1} \det [ h_{j_p-d+q}(\vect) ]^d_{p,q = 1}}{\prod^d_{k=1} j_k!}
\end{equation}
where $\hat{C}_d:= \frac{1}{d!C_d}$.
This definition is equivalent to the formula (26) of \cite{Wang09}.
Then 
\begin{equation} \label{eq:equivalence_of_partition_functions}
	Z_{d}(a_1, \cdots, a_{\ddd}) = \hat{Z}_{d}([a_1]+\cdots+[a_{\ddd}]).
\end{equation}
Setting some of the parameters to be zero, we also have for $\m\le d$ 
%\DongCom{The remark between \eqref{eq:defn_of_E_a_dots_E_s} and \eqref{eq:defn_of_bar_E_a_dots_E_s} already allows negative valued functions to be weight functions. Should we move this remark earlier?}
\begin{equation} \label{eq:equivalence_of_partition_functions-silly}
	Z_{d}(a_1, \cdots, a_{\m}) = \hat{Z}_{d}([a_1]+\cdots+[a_{\m}]).
\end{equation}

We now show that $\hat{Z}_{d}(\vect)$ solves the discrete KP hierarchy following the vertex operator construction of the general solutions due to Adler and van Moerbeke.
We then show that a general property of the vertex operator solution implies an identity 
of which the identity~\eqref{eq:algE} is a special case.
Note the definition~\eqref{eq:symmetric_formula_of_Z_m} does not require the assumption that the weight function $W(x)$ is nonnegative. 
In the next two subsections we drop this assumption. % that $W$ is nonnegative. 
%The computations of the next two subsections In the following two subsections, we 

\subsection{Discrete KP $\tau$ vector}

We  observe that:
\begin{prop} \label{thm:KP_tau_vector}
Let $\hat{Z}_{d}(\vect)$ be defined in~\eqref{eq:symmetric_formula_of_Z_m} and set $\hat{Z}_{0}(\vect) := 1$.
Then the sequence 
\begin{equation}\label{eq:seqZ}
	(\cdots, \hat{Z}_{2}(-\vect), \hat{Z}_{1}(-\vect), \hat{Z}_{0}(\vect), \hat{Z}_{1}(\vect), \hat{Z}_{2}(\vect), \cdots)
\end{equation}
constitutes a discrete KP $\tau$ vector. 
\end{prop}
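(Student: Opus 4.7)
The plan is to rewrite $\hat Z_d(\vect)$ in a form whose $\vect$-dependence and $d$-dependence fit the vertex operator construction of Adler--van Moerbeke \cite{Adler-van_Moerbeke99a}. To this end I first collapse the double sum in~\eqref{eq:symmetric_formula_of_Z_m} into a single $d\times d$ determinant. The summand is symmetric under permutations of $(j_1,\ldots,j_d)$ and vanishes when two indices coincide, so after replacing the unrestricted sum by $d!$ times the sum over strictly decreasing tuples, the Cauchy--Binet identity yields
\[
\hat Z_d(\vect)\;=\;\frac{1}{\hat C_d}\det\!\bigl[\tilde\Psi_{p,q}(\vect)\bigr]_{p,q=1}^d,
\qquad
\tilde\Psi_{p,q}(\vect):=\sum_{n\ge 0}\frac{M_{n+p-1}\,h_{n-q+1}(\vect)}{n!},
\]
after a reversal of column order. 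Crucially, $\tilde\Psi_{p,q}(\vect)$ has no $d$-dependence, so $\hat Z_d(\vect)$ is (up to the overall constant) the top-left $d\times d$ principal minor of a single infinite matrix whose entries depend on $\vect$ only through the elementary Schur polynomials $h_k(\vect)$.

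Next, I would interpret this principal-minor structure through Adler--van Moerbeke. In \cite{Adler-van_Moerbeke99a} they show that whenever an infinite matrix has the form $e^{H(\vect)}\cdot g$ for the bosonized KP Hamiltonian $H(\vect)=\sum_k t_k J_k$ and a fixed element $g\in GL(\infty)$, the sequence of its principal $d\times d$ minors constitutes the positive half of a discrete KP $\tau$ vector. The factorization of $\tilde\Psi_{p,q}(\vect)$ as a $\vect$-independent linear combination of the $h_k(\vect)$'s, together with the defining identity $\sum_k h_k(\vect)z^k=e^{\sum_n t_n z^n}$ from~\eqref{eq:definition_of_elementary_Schur}, exhibits exactly this structure: the moment data $\{M_j/j!\}$ assembles into the seed $g$, while the factor $e^{H(\vect)}$ accounts for the entire $\vect$-dependence on the standard basis.

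For the negative half of~\eqref{eq:seqZ}, the substitution $\vect\mapsto-\vect$ corresponds to the adjoint action $e^{-H(\vect)}$ in the boson--fermion correspondence; this is exactly the mechanism by which the AvM construction produces the negative-index part of the $\tau$ vector, and it matches the definition $\hat Z_{-d}(\vect):=\hat Z_d(-\vect)$ used in~\eqref{eq:seqZ}. Combined with the vacuum value $\hat Z_0(\vect):=1$, this completes the bi-infinite sequence, and the discrete KP bilinear identities then follow automatically from the AvM theorem, requiring no separate Hirota-type computation.

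The main obstacle is the verification underlying Step~2: confirming that the factor $1/n!$ in $\tilde\Psi_{p,q}(\vect)$ does not obstruct identification with the standard AvM pattern. This factor is a residue of the Harish-Chandra--Itzykson--Zuber integration performed to pass from~\eqref{eq:unscaled_external_source_model} to~\eqref{eq:symmetric_formula_of_Z_m}, and it has the effect of replacing the usual generating function $\sum_k h_k(\vect)x^k$ by its Borel-like transform $\sum_k h_k(\vect)x^k/k!$. One must verify that this reparametrization is absorbed into the choice of seed $g\in GL(\infty)$ rather than requiring a twisted version of the construction, so that the conclusion of \cite{Adler-van_Moerbeke99a} applies verbatim to the sequence~\eqref{eq:seqZ}.
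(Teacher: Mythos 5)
Your opening move --- collapsing the double sum in \eqref{eq:symmetric_formula_of_Z_m} by Andr\'eief/Cauchy--Binet into a single determinant $\hat Z_d(\vect)=\frac{\pm 1}{\hat C_d}\det\bigl[\sum_{n\ge0}M_{n+p-1}h_{n-d+q}(\vect)/n!\bigr]_{p,q=1}^d$ --- is correct (the column reversal you perform costs a sign $(-1)^{d(d-1)/2}$, harmless here), and the resulting ``principal minors of a fixed seed dressed by the time flows'' picture is a viable route: it is essentially the Grassmannian description that the paper's own Remark attributes to \cite{Wang09} and \cite{Harnad-Orlov09} and explicitly leaves open as an alternative proof. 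But as written the argument has a genuine gap, and you have in fact located it yourself. The theorem you invoke --- that the principal minors of $e^{H(\vect)}\cdot g$ form the positive half of a discrete KP $\tau$ vector --- is not what \cite{Adler-van_Moerbeke99a} proves; the content of that paper is the vertex operator construction \eqref{eq:AvM991}--\eqref{eq:AvM992} and the bilinear-identity characterization \eqref{eq:general_bilinear_identity}. A minor-based statement of the kind you want lives elsewhere (the Adler--van Moerbeke Toda-lattice papers, or Harnad--Orlov), and even granting it you would still have to (i) match your $\tilde\Psi_{p,q}$ to its hypotheses, which you explicitly defer (``one must verify\dots''), and (ii) prove, not assert, that the specific negative-index entries $\hat Z_{|d|}(-\vect)$ of \eqref{eq:seqZ} are the ones the construction produces; ``corresponds to the adjoint action in the boson--fermion correspondence'' is not an argument, since which bi-infinite extension of the semi-infinite minor sequence satisfies the bilinear identity is precisely the point at issue. (Your worry about the $1/n!$ factor, by contrast, is benign: it sits entirely in the constant seed $g_{p,n}=M_{n+p-1}/n!$ and requires no twisting.)

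For comparison, the paper proves the proposition by a different and fully explicit mechanism: it takes $\tau=\hat Z_0=1$, writes down concrete measures $\nu_d(z)dz$ on the unit circle built from the moments $M_j/j!$ (see \eqref{eq:nudef}, with the symmetric choice $\nu_{-d-1}=\nu_d$ for the negative half), and verifies by a direct Cauchy-integral computation that $\oint X(\vect,z)\hat Z_d(\vect)\nu_d(z)dz=\hat Z_{d+1}(\vect)$, so that \eqref{eq:AvM991} and \eqref{eq:AvM992} hold and \cite[Theorem 0.3]{Adler-van_Moerbeke99a} applies. That computation is exactly the concrete verification your proposal postpones. To salvage your route you would need either to carry out the Grassmannian matching in detail with the correct reference, or to check the bilinear identity \eqref{eq:general_bilinear_identity} directly from your determinant formula.
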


Discrete KP $\tau$ vectors are solutions to a system of differential-difference equations in the discrete KP-hierarchy. %, see \cite[Formulas (0.3) and (0.10)]{Adler-van_Moerbeke99a}. 
Adler and van Moerbeke established several characterizations of discrete KP $\tau$ vectors
in \cite{Adler-van_Moerbeke99a}.
Here we use two of them: the vertex operator characterization (see Proposition~\ref{prop:theorem_0.3_in_AvM} below) and the Hirota bilinear identity characterization (see~\eqref{eq:general_bilinear_identity} below).

\begin{rmk}
Each components of a discrete KP $\tau$ vector is a KP $\tau$ function.
The fact that $\hat{Z}_{d}(\vect)$ is a KP $\tau$ function for each $d\in \N$ was proved in  \cite{Wang09}.
In \cite{Harnad-Orlov09} it was proved further that for each $d\in \N$, $\hat{Z}_{d}(\vect)$ is a so-called 1-KP-Toda $\tau$ function and moreover these  1-KP-Toda $\tau$ functions are derived from the same Grassmannian structure.
It should also be possible to prove the above proposition from this fact. 
%Nevertheless, in this paper we provide a proof from the perspective of vertex operators using the general construction of a discrete KP $\tau$ vector due to Adler and van Moerbeke. 
\end{rmk}

%The discrete KP $\tau$ vector was defined by Adler and van Moerbeke in \cite{Adler-van_Moerbeke99a}, as the $\tau$ function of discrete KP hierarchy. From the result in \cite{Harnad-Orlov09}, it is not difficult to give a high-level proof of Theorem \ref{thm:KP_tau_vector} using a fermionic formalism. However, for the sake of readers unfamiliar with the fermionic language, we give a direct proof of Theorem \ref{thm:KP_tau_vector} using the vertex operator construction of a KP $\tau$ vector from a KP $\tau$ function, introduced in \cite{Adler-van_Moerbeke99a}.

%We first introduce 
The vertex operator is a differential operator defined by (see \cite[Formula (0.22)]{Adler-van_Moerbeke99a})
%\begin{equation}\label{eq:p1}
%	X(\vect, z) := e^{\sum^{\infty}_{k=1} t_kz^k } e^{ -\sum^{\infty}_{k=1} \frac{z^{-k}}{k} \frac{\partial}{\partial t_k} }.
%\end{equation}
\begin{equation}\label{eq:p1}
	X(\vect, z) := \exp \bigg( \sum^{\infty}_{k=1} t_kz^k  \bigg) \exp  \bigg( -\sum^{\infty}_{k=1} \frac{z^{-k}}{k} \frac{\partial}{\partial t_k}  \bigg).
\end{equation}
The vertex operator acts on a formal power series $f(\vect)$ as%, which is analytic in $\vect$, is formally
\begin{equation}\label{eq:vertexopq}
	X(\vect,z)f(\vect) =e^{ \sum^{\infty}_{k=1} t_kz^k } f(\vect-[z^{-1}]) = \bigg( \sum^{\infty}_{k=-\infty}h_k(\vect)z^{k} \bigg) f(\vect-[z^{-1}]).
\end{equation}
Adler and van Moerbeke found a very general construction of a discrete KP $\tau$ vector from one KP $\tau$ function and a sequence of measures using the vertex operator: 

\begin{prop}[{\cite[Theorem 0.3]{Adler-van_Moerbeke99a}}] \label{prop:theorem_0.3_in_AvM}
Let $\tau(\vect)$ be a  KP $\tau$ function. 
Let $(\cdots$, $\nu_{-1}(z)dz$, $\nu_0(z)dz$, $\nu_1(z)dz$, $\cdots)$  be  a sequence of arbitrary measures. %, on either $\realR$ or $\compC$. 
Then the infinite sequence %of 
%$\tau$ functions: 
%functions 
$(\cdots, \tau_{-1}(\vect), \tau_0(\vect), \tau_1(\vect), \cdots)$ defined as $\tau_0 (\vect):= \tau(\vect)$ and  
\begin{align}
	\tau_d(\vect) := & \left( \int X(\vect, z)\nu_{d-1}(z)dz \right) \cdots 
	\left(\int X(\vect, z)\nu_0(z)dz  \right) \tau(\vect), \label{eq:AvM991}\\
	\tau_{-d}(\vect)  := & \left( \int X(-\vect, z)\nu_{-d}(z)dz \right) \cdots 
	\left( \int X(-\vect, z)\nu_{-1}(z)dz \right) \tau(\vect), \label{eq:AvM992}
\end{align}
for $d\in\N$,
forms a discrete KP $\tau$ vector.
\end{prop}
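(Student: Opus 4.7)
The plan is to verify the Hirota bilinear identity characterization of a discrete KP $\tau$ vector (referenced in the paper as \eqref{eq:general_bilinear_identity}) for the sequence $(\tau_d)_{d \in \intZ}$, proceeding by induction on $|d|$ from the seed $\tau_0 = \tau$. Recall that a sequence $(\tau_d)$ is a discrete KP $\tau$ vector precisely when, for every $n \geq m$ and every $\vect, \vect'$,
\[
\oint_\infty \tau_{n+1}(\vect - [z^{-1}])\, \tau_m(\vect' + [z^{-1}])\, e^{\xi(\vect - \vect', z)}\, z^{n-m}\, dz = 0,
\]
where $\xi(\vect, z) := \sum_{k \geq 1} t_k z^k$. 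The base case $n = m = 0$ is, modulo a standard change of variables, the assumed KP bilinear identity for the seed $\tau$.

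The central computational input is the operator product expansion of vertex operators. A Baker--Campbell--Hausdorff computation based on the commutator
\[
\Bigl[ \sum_{k \geq 1} \frac{z^{-k}}{k} \partial_{t_k},\ \sum_{j \geq 1} t_j w^j \Bigr] = -\log(1 - w/z) \quad (|w| < |z|)
\]
yields, for the dual vertex operator $X^*(\vect, z) := e^{-\xi(\vect, z)} \exp(\sum_k \tfrac{z^{-k}}{k} \partial_{t_k})$, the normal-ordering identity
\[
X(\vect, z_1)\, X^*(\vect, z_2) \ \sim\ \frac{1}{1 - z_2/z_1}\ \cdot\ (\text{regular in } z_1, z_2).
\]
The simple pole at $z_1 = z_2$ is precisely what produces, upon a contour integration in $z_1$, the residue that implements the discrete index shift $d \mapsto d+1$ inside the bilinear identity.

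The inductive step proceeds as follows. Using~\eqref{eq:vertexopq}, write
\[
\tau_{d+1}(\vect) = \int \nu_d(z)\, e^{\xi(\vect, z)}\, \tau_d(\vect - [z^{-1}])\, dz,
\]
so that one more application of a vertex operator amounts to the shift $\vect \mapsto \vect - [z^{-1}]$ followed by integration against the scalar, $\vect$-independent measure $\nu_d$. Substituting this into the bilinear integrand for $(\tau_{d+1}, \tau_n)$ and interchanging the outer $\nu_d$-integral with the inner $z$-contour, the OPE above lets one evaluate the residue at $z = z_1$ and reduce the claim to the bilinear identity for $(\tau_d, \tau_n)$, which holds by the inductive hypothesis. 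The negative-index half of the sequence, built by~\eqref{eq:AvM992}, is handled by an entirely symmetric argument: since $X(-\vect, z)$ differs from $X^*(\vect, z)$ only by the sign inversion $\vect \mapsto -\vect$ already built into the definition, the same OPE and residue calculation apply.

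The main obstacle is the formal-algebraic bookkeeping: the definition of $\tau_d$ involves $d$ nested contour integrals from the outer vertex operators, and these must be legitimately commuted with the single contour integral from the bilinear identity. Because the measures $\nu_k(z)\, dz$ are arbitrary and carry no canonical contour, the cleanest setting is to work throughout at the level of formal Laurent series in the spectral variables and formal power series in the $t_k$'s, in which all interchanges of sums and contour integrals become automatic; this is the framework adopted in \cite{Adler-van_Moerbeke99a}. A secondary subtlety is verifying compatibility between the positive and negative branches of the sequence across $d = 0$, which requires matching the asymmetric conventions of~\eqref{eq:AvM991} and~\eqref{eq:AvM992}, but is ultimately a routine check.
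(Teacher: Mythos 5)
First, a point of comparison: the paper offers no proof of this proposition. It is quoted as \cite[Theorem 0.3]{Adler-van_Moerbeke99a}, and the only commentary is the remark that the argument there survives replacing measures on $\realR$ by measures on contours in $\compC$. So there is no internal proof to match yours against; what follows assesses your sketch on its own terms. Your overall strategy --- verify the Hirota bilinear characterization \eqref{eq:general_bilinear_identity} for the sequence \eqref{eq:AvM991}--\eqref{eq:AvM992} by vertex-operator calculus --- is the right one and is essentially the route of \cite{Adler-van_Moerbeke99a}.

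The gap: your induction, as described, only ever substitutes the vertex representation $\tau_{d+1}(\vect)=\int\nu_d(u)\,e^{\sum_k t_k u^k}\,\tau_d(\vect-[u^{-1}])\,du$ into the \emph{first} slot of the bilinear pairing. That step is fine, but it is purely algebraic: the factor $e^{-\sum_k z^{-k}u^k/k}=1-u/z$ coming from the shift $\vect\mapsto\vect-[z^{-1}]$ of the exponential prefactor combines with the factor $(1-z/u)^{-1}$ produced by absorbing $-[u^{-1}]$ into $e^{\sum_j(\tilde t_j-t_j)z^j}$ to give exactly $-u/z$; the pole at $z=u$ is cancelled by a matching zero, no residue is taken, and the net effect is to lower the exponent $z^{d_1-d_2-1}$ by one. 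This reduces the identity for the pair $(d_1,d_2)$ to the one for $(d_1-1,d_2)$ only while $d_1-1>d_2$; the chain terminates at the \emph{diagonal} identities $d_1=d_2+1$, which assert that each individual $\tau_d$ is itself a KP $\tau$ function, and these are not the seed's bilinear identity (your claimed base case ``$n=m=0$ is the assumed KP identity for the seed'' already involves $\tau_1$, not $\tau_0$ alone). Proving the diagonal identities is the actual content of the theorem --- that $\int X(\vect,z)\nu(z)\,dz$ maps KP $\tau$ functions to KP $\tau$ functions --- and it requires substituting the vertex representation into \emph{both} slots, producing a double integral in two spectral variables in which the pole at $z=u_2$ is not cancelled; there the residue extraction and the OPE/antisymmetrization argument you state in your preamble must actually be deployed. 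As written, you announce the OPE but never use it where it is needed, and you invoke a residue mechanism in the one step where none occurs. A smaller issue of the same flavor: the identities coupling the positive and negative halves of the sequence ($d_1>0>d_2$), which mix the $X(\vect,\cdot)$ and $X(-\vect,\cdot)$ constructions, are not reached by either half of your induction and are more than the ``routine check'' you claim.
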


\begin{rmk}
The original statement of \cite[Theorem 0.3]{Adler-van_Moerbeke99a}  assumes that the measures $\nu_k(z)dz$ are defined on $\realR$. However,  it is easy to check that the proof to Proposition \ref{prop:theorem_0.3_in_AvM} in \cite{Adler-van_Moerbeke99a} holds almost verbatim if we change $\realR$ into $\compC$ (or more restrictively the unit circle $\{ z \in \compC \mid \lvert z \rvert =1 \}$ that we are going to use in this section).
\end{rmk}

\begin{proof}[Proof of Proposition \ref{thm:KP_tau_vector}]
We set $\tau(\vect) = \hat{Z}_{0}(\vect) :=1$. 
This is trivially a KP $\tau$ function. 
Hence Proposition~\ref{thm:KP_tau_vector} is proved if we can construct a sequence of measures $(\cdots$, $\nu_{-1}(z)dz$, $\nu_0(z)dz$, $\nu_1(z)dz$, $\cdots)$ such that $\hat{Z}_{d}(\vect)$ equals the right-hand side of~\eqref{eq:AvM991} (\resp\ \eqref{eq:AvM992}) with $\tau(\vect)= 1$ %\hat{Z}_{W,0}(\vect)$
for $d>0$ (\resp\ $d<0$).  

Define the measure on the circle $\{z\in \compC : |z|=1\}$ as 
\begin{equation}\label{eq:nudef}
	\nu_d(z) dz := \frac{(-1)^d\hat{C}_d}{2\pi i \hat{C}_{d+1}} \sum^{\infty}_{j=0}\frac{M_{j+d}}{j!}z^{d-j-1} dz, \qquad d=0,1,2,\cdots, 
\end{equation}
where $\hat{C}_0:=1$.
We also define
\begin{equation}\label{eq:nudefnegative}
	\nu_d(z) : = \nu_{-d-1}(z) \qquad d=-1,-2, \cdots.
\end{equation}
We now show that these measures satisfy the desired property. 

For $d=0$, since $\hat{Z}_{0}(\vect)=1$,~\eqref{eq:vertexopq}  implies that 
$X(\vect, z)\hat{Z}_{0}(\vect)=\sum^{\infty}_{k=-\infty}h_k(\vect)z^{k}$.
Hence we find from a direct evaluation of the integral using the Cauchy integral formula  (the integral is over the unit circle) that 
\begin{equation} \label{eq:definition_of_nu_0}
  	\oint X(\vect, z)\hat{Z}_{0}(\vect) \nu_0(z)dz 
	= \frac1{\hat{C}_1} \sum_{j=0}^\infty \frac{h_j(\vect)M_j}{j!} = \hat{Z}_{1}(\vect)
\end{equation}
from the definition~\eqref{eq:symmetric_formula_of_Z_m}.

We now consider $d>0$. 
From~\eqref{eq:vertexopq}, 
\begin{equation} \label{eq:expression_of_hatZ_W,d_after_vertex_operator}
\begin{split}
  X(\vect, z) &\hat{Z}_{d}(\vect) 
  = \frac{1}{d!\hat{C}_d} \bigg( \sum^{\infty}_{k=-\infty}h_k(\vect)z^{k} \bigg) \\
  &\times 
  \sum^{\infty}_{j_1, \cdots, j_d = 0} \frac{\det [ M_{j_p+q-1} ]^d_{p,q = 1} \det [ h_{j_p-d+q}(\vect - [z^{-1}]) ]^d_{p,q = 1}}{\prod^d_{k=1} j_k!}.
\end{split}
\end{equation}
Note that from~\eqref{eq:definition_of_elementary_Schur} we have 
%We observe that %if the $\vect$ in \eqref{eq:definition_of_elementary_Schur} is substituted by $\vect - [z^{-1}]$, 
$\sum^{\infty}_{j=-\infty} h_j(\vect - [z^{-1}]) w^j = e^{\sum_{j=1}^\infty (t_j - \frac{z^{-j}}{j})w^j} =  \left( 1 - \frac{w}{z} \right) \sum^{\infty}_{j=-\infty} h_j(\vect) w^j$.
Comparing the coefficients of $w^j$, we find that 
%By comparison of each coefficient of $w^j$, we get
\begin{equation} \label{eq:formula_of_h(t-[z^-1])}
	h_j(\vect - [z^{-1}]) = h_j(\vect) - z^{-1}h_{j-1}(\vect). %, \qquad \textnormal{for $j \geq 0$.}
\end{equation}
Substituting \eqref{eq:formula_of_h(t-[z^-1])} into \eqref{eq:expression_of_hatZ_W,d_after_vertex_operator}, we can derive, after some algebra, that 
\begin{multline}\label{eq:vertextoZh1}
 	X(\vect, z)\hat{Z}_{d}(\vect) 
=  \frac{(-1)^d}{(d+1)!\hat{C}_d} \sum^{\infty}_{j_0, j_1, \cdots, j_d = 0} \prod^n_{k=0} \frac{1}{j_k!}\\
\times \begin{vmatrix}
M_{j_0} %& M_{j_0+1} 
& \cdots & M_{j_0+d-1} & j_0! z^{j_0-d} \\
M_{j_1} %& M_{j_1+1} 
& \cdots & M_{j_1+d-1} & j_1! z^{j_1-d} \\
\vdots & \ddots & \vdots &\vdots \\
M_{j_d} %& M_{j_n+1} 
& \cdots & M_{j_d+d-1} & j_n! z^{j_d-d} \\
\end{vmatrix}
% \phantom{\frac{(-1)^nC^{-1}_n}{(n+1)!} \sum^{\infty}_{j_0, j_1, \cdots, j_n = 0} \prod^n_{k=0} \frac{1}{j_k!}} \times
\begin{vmatrix}
h_{j_0-d}(\vect) & h_{j_0-d+1}(\vect) & \cdots & h_{j_0}(\vect) \\
h_{j_1-d}(\vect) & h_{j_1-d+1}(\vect) & \cdots & h_{j_1}(\vect) \\
%h_{j_2-n}(\vect) & h_{j_2-n+1}(\vect) & \cdots & h_{j_2}(\vect) \\
\vdots & \vdots & \ddots & \vdots \\
h_{j_d-d}(\vect) & h_{j_d-d+1}(\vect) & \cdots & h_{j_d}(\vect)
\end{vmatrix}.
\end{multline}
Note that there is one more summation index $j_0$ and the determinants are of $d+1$ by $d+1$ matrices.
Then from~\eqref{eq:vertextoZh1} and~\eqref{eq:symmetric_formula_of_Z_m}, and noting that the variable $z$ appears only in the last column of the first matrix in the ~\eqref{eq:vertextoZh1}, 
we can check directly using the Cauchy integral formula that 
\begin{equation} \label{eq:definition_of_nu_n}
  \oint X(\vect, z)\hat{Z}_{d}(\vect) \nu_d(z)dz = \hat{Z}_{d+1}(\vect), \qquad d>0.
\end{equation}
Successive applications of the relation~\eqref{eq:definition_of_nu_n} imply that for all $d > 0$,
\begin{equation}\label{eq:aqz}
	\hat{Z}_{d}(\vect) = \left( \int X(\vect, z)\nu_{d-1}(z)dz \right) \cdots 
	\left( \int X(\vect, z)\nu_0(z)dz \right) \hat{Z}_{0}(\vect),
\end{equation}
which is same as~\eqref{eq:AvM991}.

Finally,~\eqref{eq:nudefnegative} and~\eqref{eq:aqz} imply that 
\begin{equation}
	\hat{Z}_{d}(-\vect) = \left( \int X(-\vect, z)\nu_{-d}(z)dz\right)  \cdots 
	\left( \int X(-\vect, z)\nu_{-1}(z)dz \right) \hat{Z}_{0}(\vect).
\end{equation}
This is same as~\eqref{eq:AvM992}.
Hence the proposition is proved.
\end{proof}

\subsection{Fay-like identity}

An importance property of discrete KP $\tau$ vector is that 
its components satisfy a Hirota bilinear identity (see \cite[Theorem 0.2(iii)]{Adler-van_Moerbeke99a}).
(Adler and van Moerbeke, moreover, showed that the Hirota bilinear identity actually characterizes the discrete KP $\tau $ vector.)
For the discrete KP $\tau$ vector~\eqref{eq:seqZ} in our situation, 
this identity becomes 
\begin{equation} \label{eq:general_bilinear_identity}
\frac{1}{2\pi i}\oint_{z=\infty} \hat{Z}_{d_1}(\tilde{\vect} - [z^{-1}]) \hat{Z}_{d_2+1}(\vect + [z^{-1}]) e^{\sum^{\infty}_{j=1} (\tilde{t}_j - t_j)z^j} z^{d_1 - d_2 - 1} dz = 0
\end{equation}
for all $d_1 >d_2 \geq 0$.
Here the formal integral of a formal Laurent series is defined by  
\begin{equation}\label{eq:Cauchyin}
	\frac{1}{2\pi i}\oint_{z=\infty} \bigg( \sum^{\infty}_{j=-\infty}a_jz^j \bigg) dz = a_{-1}.
\end{equation}

We now show that this Hirota bilinear identity implies a Fay-like identity~\eqref{eq:2_variable_Fay_identityqq}.
Such a derivation of a Fay-like identity from the Hirota bilinear identity was obtained in the Toda lattice hierarchy by \cite{Teo06} and we adapt this approach.

We take the special choices $d_1 = d$, $d_2 = d-2$ and $\tilde{\vect} = \vect + [a] + [b]$ in~\eqref{eq:general_bilinear_identity}.
Then the factor $e^{\sum^{\infty}_{j=1} (\tilde{t}_j - t_j)z^j} z^{d_1 - d_2 - 1}$ equals 
$ze^{\sum_{j=1}^\infty (\frac{a^j}{j} + \frac{b^j}{j})z^j}$.
After re-writing the sum in the exponent %$\sum_{j=1}^\infty (\frac{a^j}{j} + \frac{b^j}{j})z^j$ 
as $-\log(1-az)-\log(1-bz)$, 
and using the simple identity 
$\frac{abz}{(1-az)(1-bz)} = \frac1{(a-b)z} (\frac{b}{1-az}- \frac{a}{1-bz})+\frac{1}{z}$, we find that 
\begin{equation}
	ze^{\sum_{j=1}^\infty (\frac{a^j}{j} + \frac{b^j}{j})z^j} 
	= \frac{1}{a(a-b)z}  \sum_{j=0}^\infty a^jz^j - \frac{1}{b(a-b)z} \sum_{j=0}^\infty b^jz^j +\frac1{abz}.
\end{equation}
Using this,~\eqref{eq:general_bilinear_identity} implies that 
\begin{equation} \label{eq:identity_of_formal_residues}
\begin{split}
	& \frac{a}{2\pi i}\oint_{z=\infty}  Q(z^{-1})
\bigg( \sum_{j=0}^\infty b^jz^j \bigg) \frac{dz}{z} 
- \frac{b}{2\pi i}\oint_{z=\infty} Q(z^{-1}) \bigg( \sum_{j=0}^\infty a^jz^j  \bigg) \frac{dz}{z} 
 \\
	& =\frac{a-b}{2\pi i}\oint_{z=\infty}  Q(z^{-1}) \frac{dz}{z},
\end{split}
\end{equation}
where $Q$ is defined by
\begin{equation}
\begin{split}
	Q(w):= \hat{Z}_{d}(\vect+[a]+[b]-[w]) \hat{Z}_{d-1}(\vect+[w]).
\end{split}
\end{equation}
Observe that the Laurent series of $Q(w)$ consists only of non-negative powers of $w$. 
Hence 
\begin{equation}
\begin{split}
	Q(w) = \sum_{n=0}^\infty q_n w^n
\end{split}
\end{equation}
for some $q_0, q_1, \cdots$. 
Thus, from~\eqref{eq:Cauchyin} the integral 
\begin{equation} \label{eq:identity_of_formal_residues}
\begin{split}
  \frac{1}{2\pi i} \oint_{z=\infty} Q(z^{-1}) \bigg( \sum_{j=0}^\infty a^jz^j  \bigg) \frac{dz}{z} = {}& \sum_{n=0}^\infty q_n a^n = Q(a) \\
  = {}& \hat{Z}_{d}(\vect+[a]) \hat{Z}_{d-1}(\vect+[b]).
\end{split}
\end{equation}
Similar evaluations of the other integrals of~\eqref{eq:identity_of_formal_residues} imply  
the following Fay-like identity: 
%All formal residues in \eqref{eq:identity_of_formal_residues} can be evaluated directly. The result can be written in the determinantal form
\begin{equation}\label{eq:2_variable_Fay_identityqq}
\begin{split}
	&a \hat{Z}_{d}(\vect+[a]) \hat{Z}_{d-1}(\vect+[b]) 
	- b \hat{Z}_{d}(\vect+[b]) \hat{Z}_{d-1}(\vect+[a])  
	  \\
	&= (a-b) \hat{Z}_{d}(\vect+[a]+[b]) \hat{Z}_{d-1}(\vect).  
\end{split}
\end{equation}

\bigskip

In the remaining part of this section, using identity \eqref{eq:2_variable_Fay_identityqq} we prove that:
\begin{prop}\label{prop:detZhat}
For any $d \geq \m \geq 1$ and $a_1, \cdots, a_\m \in \compC$
\begin{equation}\label{eq:Fayid}
	\frac{\hat{Z}_{d}(\vect+[a_1]+ \cdots +[a_\m])}{\hat{Z}_{d}(\vect)} = 
\frac{1}{\Delta_\m(a_1, \cdots, a_\m)} \det \left[ a^{\m-j}_k\frac{\hat{Z}_{d+1-j}(\vect+[a_{k}])}{\hat{Z}_{d+1-j}(\vect)} \right]^\m_{j,k=1}
\end{equation}
where $\Delta_\m(a_1, \cdots, a_\m) := \prod_{1\le j<k\le \m} (a_j-a_k)$.
\end{prop}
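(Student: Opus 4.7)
The plan is to prove Proposition~\ref{prop:detZhat} by induction on $\m$, using the two-variable Fay identity~\eqref{eq:2_variable_Fay_identityqq} as the only external input beyond elementary determinantal manipulations. The case $\m=1$ is tautological, and the case $\m=2$ is exactly a rewriting of~\eqref{eq:2_variable_Fay_identityqq} after dividing by $\hat{Z}_d(\vect)\hat{Z}_{d-1}(\vect)$. So fix $\m\ge 3$ and assume the identity holds for all positive integers less than $\m$.

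Let $A$ denote the $\m\times \m$ matrix whose $(j,k)$ entry is $a_k^{\m-j}\hat{Z}_{d+1-j}(\vect+[a_k])/\hat{Z}_{d+1-j}(\vect)$, and set $\sigma := [a_1]+\cdots+[a_\m]$, $\sigma_k := \sigma - [a_k]$, and $\sigma_{1,\m} := \sigma - [a_1] - [a_\m]$. The target identity is $\det A = \Delta_\m(a_1,\ldots,a_\m)\hat{Z}_d(\vect+\sigma)/\hat{Z}_d(\vect)$. Apply the classical Desnanot--Jacobi (Dodgson condensation) identity at the corner pair $(1,1)$ and $(\m,\m)$:
\begin{equation*}
\det A \cdot \det A_{1,\m}^{1,\m} = \det A_1^1 \cdot \det A_\m^\m - \det A_1^\m \cdot \det A_\m^1,
\end{equation*}
where $A_I^J$ denotes the submatrix of $A$ obtained by deleting the rows in $I$ and the columns in $J$. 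Each of the four border minors has size $\m-1$ and the central minor has size $\m-2$. Each can be matched with the inductive hypothesis, applied at dimension $d$ or $d-1$ and with the appropriate subset of $\{a_1,\ldots,a_\m\}$, by factoring a suitable power of $a_k$ out of each surviving column and relabeling the row index $j\mapsto j-1$ whenever the top row of the submatrix is $j=2$ (which corresponds to the dimension drop $d\mapsto d-1$ and the accompanying shift $\hat{Z}_{d+1-j}\mapsto \hat{Z}_{d-j}$). The net effect is that each sub-determinant is identified with a product of a monomial in the $a_k$, a Vandermonde, and a ratio of the form $\hat{Z}_\bullet(\vect+\sigma_\bullet)/\hat{Z}_\bullet(\vect)$.

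After these substitutions, the right-hand side of the Desnanot--Jacobi identity contains the combination $a_1\hat{Z}_d(\vect+\sigma_\m)\hat{Z}_{d-1}(\vect+\sigma_1) - a_\m\hat{Z}_d(\vect+\sigma_1)\hat{Z}_{d-1}(\vect+\sigma_\m)$, which is precisely the right-hand side of~\eqref{eq:2_variable_Fay_identityqq} with $\vect$ replaced by $\vect+\sigma_{1,\m}$ and $(a,b)=(a_1,a_\m)$; it therefore equals $(a_1-a_\m)\hat{Z}_d(\vect+\sigma)\hat{Z}_{d-1}(\vect+\sigma_{1,\m})$. Cancelling the common factor $\bigl(\prod_{l=2}^{\m-1}a_l\bigr)\hat{Z}_{d-1}(\vect+\sigma_{1,\m})/\hat{Z}_{d-1}(\vect)$ from both sides and invoking the elementary Vandermonde relation
\begin{equation*}
\Delta_\m(a_1,\ldots,a_\m) = \frac{(a_1-a_\m)\,\Delta_{\m-1}(a_1,\ldots,a_{\m-1})\,\Delta_{\m-1}(a_2,\ldots,a_\m)}{\Delta_{\m-2}(a_2,\ldots,a_{\m-1})}
\end{equation*}
yields the desired formula $\det A = \Delta_\m(a_1,\ldots,a_\m)\hat{Z}_d(\vect+\sigma)/\hat{Z}_d(\vect)$. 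The cancellations tacitly assume the $a_k$ are nonzero and pairwise distinct, but this is harmless: the cleared-denominator form $\det A = \Delta_\m\cdot\hat{Z}_d(\vect+\sigma)/\hat{Z}_d(\vect)$ is a polynomial identity in $(a_1,\ldots,a_\m)$ with coefficients in the ring of formal power series in $\vect$, so equality on a Zariski-dense open subset extends to all of $\compC^\m$.

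The main obstacle I anticipate is the bookkeeping in the previous paragraph: the simultaneous tracking of (i) the power of $a_k$ that must be pulled from each column to normalize the minor, (ii) the row-index shift corresponding to the change of dimension from $d$ to $d-1$, and (iii) the Vandermonde factors $\Delta_{\m-1}$ or $\Delta_{\m-2}$ produced by the reduced index sets, must be done consistently so that the two-variable Fay bracket emerges cleanly. Once that is arranged, the Vandermonde identity above and a single invocation of~\eqref{eq:2_variable_Fay_identityqq} finish the argument mechanically.
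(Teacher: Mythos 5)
Your proposal is correct and follows essentially the same route as the paper: induction on $\m$ with base case the two-variable Fay identity~\eqref{eq:2_variable_Fay_identityqq}, the Desnanot--Jacobi identity applied to the very same $\m\times\m$ matrix (the paper's $\mathcal{B}$), identification of the four corner minors and the central minor with instances of the inductive hypothesis at dimensions $d$ and $d-1$, a single application of~\eqref{eq:2_variable_Fay_identityqq} at the shifted time $\vect+[a_2]+\cdots+[a_{\m-1}]$, and the same Vandermonde relation. The only differences are cosmetic --- you read the computation from $\det A$ downward while the paper assembles it upward from the Fay identity, and you add an explicit (and harmless) density argument for coincident or vanishing $a_k$.
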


\begin{proof}%[Proof of Proposition~\ref{prop:detZhat}]
After dividing the identity~\eqref{eq:2_variable_Fay_identityqq}  by $\hat{Z}_{d}(\vect) \hat{Z}_{d-1}(\vect)$, we obtain 
\begin{equation}\label{eq:2_variable_Fay_identity}
	\frac{\hat{Z}_{d}(\vect+[a]+[b])}{\hat{Z}_{d}(\vect)} = \frac{1}{a-b}
	\det
\begin{bmatrix}
	  a\frac{\hat{Z}_{d}(\vect+[a])}{\hat{Z}_{d}(\vect)}
	 &b\frac{\hat{Z}_{d}(\vect+[b])}{\hat{Z}_{d}(\vect)}  \\
	 \frac{\hat{Z}_{d-1}(\vect+[a])}{\hat{Z}_{d-1}(\vect)}
	&\frac{\hat{Z}_{d-1}(\vect+[b])}{\hat{Z}_{d-1}(\vect)} 
\end{bmatrix}.
\end{equation}
This is the identity~\eqref{eq:Fayid} when $\m=2$ for all $d\ge 2$.

We now prove the general case using an induction in $\m$. 
Suppose that~\eqref{eq:Fayid} holds with $\m\le  m-1$  for all $d\ge m-1$ and $a_1, \cdots, a_{m-1}\in \compC$.  
We are to prove that it holds with  $\m=m$ for all $d\ge m$ and $a_1, \cdots, a_{m}\in \compC$.  
For this purpose, we set $a=a_1$, $b=a_m$, and $\vect\mapsto \vect+ [a_2]+ \cdots +[a_{m-1}]$
in~\eqref{eq:2_variable_Fay_identity}.
After pulling out the denominators of the entries of the determinant outside, we obtain
\begin{equation}\label{eq:2_variable_Fay_identity1}
\begin{split}
	& (a_1-a_m) \hat{Z}_{d}(\vect+[a_1]+ \cdots +[a_m])\hat{Z}_{d-1}(\vect+[a_2]+ \cdots +[a_{m-1}]) \\
	&= %\frac{1}{a_1-a_m}
	\det
\begin{bmatrix}
	  a_1 \hat{Z}_{d}(\vect+[a_1]+ \cdots +[a_{m-1}])
	 &a_m \hat{Z}_{d}(\vect+[a_2]+ \cdots +[a_m])  \\
	 \hat{Z}_{d-1}(\vect+[a_1]+ \cdots +[a_{m-1}])
	& \hat{Z}_{d-1}(\vect+[a_2]+ \cdots +[a_m])
\end{bmatrix}.
\end{split}
\end{equation}

Let us call the entries of the last determinant $A_{ij}$, $i,j=1,2$. First we consider $A_{ij}$ on the first row.
From the induction hypothesis, 
\begin{equation}\label{eq:2_variable_Fay_identity2}
\begin{split}
	& \frac{A_{11}}{\hat{Z}_{d}(\vect)} 
	= 
\frac{a_1}{\Delta_{m-1}(a_1, \cdots, a_{m-1})} 
	\det \left[ a^{m-1-j}_k\frac{\hat{Z}_{d+1-j}(\vect+[a_{k}])}{\hat{Z}_{d+1-j}(\vect)} \right]^{m-1}_{j,k=1} .
\end{split}
\end{equation}
If we multiply $a_2\cdots a_{m-1}$ on both sides and bring the factor $a_1\cdots a_{m-1}$ inside the determinant, we find that 
\begin{equation}\label{eq:2_variable_Fay_identity3}
\begin{split}
	& a_2\cdots a_{m-1} \frac{A_{11}}{\hat{Z}_{d}(\vect)} 
	= 
\frac{1}{\Delta_{m-1}(a_1, \cdots, a_{m-1})} 
	\det \left[B_{jk}\right]_{1\le j\le m-1, \, 1\le k\le m-1} %^{m-1}_{j,k=1}
\end{split}
\end{equation}
where
\begin{equation}\label{eq:2_variable_Fay_identity4}
\begin{split}
	B_{jk}:= a^{m-j}_k\frac{\hat{Z}_{d+1-j}(\vect+[a_{k}])}{\hat{Z}_{d+1-j}(\vect)}.
\end{split}
\end{equation}
Note that the power of $a_k$ is changed to $m-j$ from $m-1-j$ of~\eqref{eq:2_variable_Fay_identity2}.
Similarly, we find that 
\begin{equation}\label{eq:2_variable_Fay_identity5}
\begin{split}
	& a_2\cdots a_{m-1} \frac{A_{12}}{\hat{Z}_{d}(\vect)} 
	= 
\frac{1}{\Delta_{m-1}(a_2, \cdots, a_{m})} 
	\det \left[B_{jk}\right]_{1\le j\le m-1, \, 2\le k\le m}
\end{split}
\end{equation}
with the same definition~\eqref{eq:2_variable_Fay_identity4} of $B_{jk}$. 
Note the difference of the indices of the determinant from~\eqref{eq:2_variable_Fay_identity3}.

Now we consider $A_{ij}$ in the second row. The induction hypothesis implies that 
\begin{equation}\label{eq:2_variable_Fay_identity6}
\begin{split}
	& \frac{A_{21}}{\hat{Z}_{d-1}(\vect)} 
	= 
\frac{1}{\Delta_{m-1}(a_1, \cdots, a_{m-1})} 
	\det \left[ a^{m-1-j}_k\frac{\hat{Z}_{d-j}(\vect+[a_{k}])}{\hat{Z}_{d-j}(\vect)} \right]^{m-1}_{j,k=1} .
\end{split}
\end{equation}
Note that $d$ is changed to $d-1$ in the determinant from~\eqref{eq:2_variable_Fay_identity3}.
If we shift the index $j$ by $j-1$ in the determinant, we can write the above as 
\begin{equation}\label{eq:2_variable_Fay_identity7}
\begin{split}
	& \frac{A_{21}}{\hat{Z}_{d-1}(\vect)} 	= 
\frac{1}{\Delta_{m-1}(a_1, \cdots, a_{m-1})} 
	\det \left[B_{jk}\right]_{2\le j\le m, \, 1\le k\le m-1}. 
\end{split}
\end{equation}
Similarly,
\begin{equation}\label{eq:2_variable_Fay_identity8}
\begin{split}
	& \frac{A_{22}}{\hat{Z}_{d-1}(\vect)} 	= 
\frac{1}{\Delta_{m-1}(a_2, \cdots, a_{m})} 
	\det \left[B_{jk}\right]_{2\le j\le m, \, 2\le k\le m}. 
\end{split}
\end{equation}

Consider the matrix $\mathcal{B}$ of size $m$ whose entries are $B_{jk}$, $j,k=1, \cdots, m$. 
Let $\mathcal{B}_a^b$ denote the matrix of size $m-1$ obtained from $\mathcal{B}$ by deleting the row $a$ and the column $b$. 
Then the determinants in~\eqref{eq:2_variable_Fay_identity3},~\eqref{eq:2_variable_Fay_identity5},~\eqref{eq:2_variable_Fay_identity7}, and~\eqref{eq:2_variable_Fay_identity8} are the determinants of the matrices
$\mathcal{B}_m^m$, $\mathcal{B}_m^1$, $\mathcal{B}_1^m$, and $\mathcal{B}_1^1$, respectively. 
Hence we find that 
\begin{equation}\label{eq:2_variable_Fay_identity9}
\begin{split}
	& \frac{a_2\cdots a_{m-1}\Delta_{m-1}(a_1, \cdots, a_{m-1})\Delta_{m-1}(a_2, \cdots, a_{m})}{\hat{Z}_{d-1}(\vect)\hat{Z}_{d}(\vect)}
	\det \begin{bmatrix}
	  A_{11} & A_{12} \\ A_{21} & A_{22}
\end{bmatrix} \\
	&= \det \left[ \mathcal{B}_m^m\right]  \det \left[ \mathcal{B}_1^1\right]
	- \det \left[ \mathcal{B}_m^1\right] \det \left[ \mathcal{B}_1^m\right] .
\end{split}
\end{equation}
Now the Desnanot-Jacobi identity (see \eg\ \cite[Proposition 10]{Krattenthaler99}) implies that the above equals $\det\big[\mathcal{B}\big]
\det\big[ \mathcal{B}_{1,m}^{1,m} \big]$
where $\mathcal{B}_{1,m}^{1,m}$ is the matrix of size $m-2$ obtained by deleting the rows $1,m$ and the columns $1,m$ from $\mathcal{B}$.
The determinant $\det\big[\mathcal{B}\big]$ is precisely the determinant in~\eqref{eq:Fayid} with $\m=m$.
On the other hand, 
\begin{equation}\label{eq:2_variable_Fay_identity10}
\begin{split}
	 \frac{\det\big[ \mathcal{B}_{1,m}^{1,m} \big]}{a_2\cdots a_{m-1}}
	&= \det \left[ a^{m-1-j}_k\frac{\hat{Z}_{d+1-j}(\vect+[a_{k}])}{\hat{Z}_{d+1-j}(\vect)} \right]^{m-1}_{j,k=2} \\
	&= \det \left[ a^{m-2-j}_{k+1}\frac{\hat{Z}_{d-j}(\vect+[a_{k+1}])}{\hat{Z}_{d-j}(\vect)} \right]_{j,k=1}^{m-2}  .
\end{split}
\end{equation}
The last determinant is precisely the determinant in~\eqref{eq:Fayid} with $\m=m-1$, $d$ replaced by $d-1$, 
and the complex numbers given by $a_2, \cdots, a_{m-1}$. 
The induction hypothesis implies  the identity
\begin{equation}\label{eq:2_variable_Fay_identity11}
\begin{split}
	& \frac{\det\big[ \mathcal{B}_{1,m}^{1,m} \big]}{a_2\cdots a_{m-1}}
	= \Delta_{m-2}(a_2, \cdots, a_{m-1}) 
	\frac{\hat{Z}_{d-1}(\vect+[a_2]+ \cdots +[a_{m-1}])}{\hat{Z}_{d-1}(\vect)} .
\end{split}
\end{equation}

Combining~\eqref{eq:2_variable_Fay_identity1},~\eqref{eq:2_variable_Fay_identity9}, and~\eqref{eq:2_variable_Fay_identity11}, and noting that 
\begin{equation}\label{eq:2_variable_Fay_identity12}
\begin{split}
	\frac{\Delta_{m-2}(a_2, \cdots, a_{m-1})}{(a_1-a_m)\Delta_{m-1}(a_1, \cdots, a_{m-1})\Delta_{m-1}(a_2, \cdots, a_{m})} 
	= \frac1{\Delta_{m}(a_1, \cdots, a_{m})},
\end{split}
\end{equation}
we obtain~\eqref{eq:Fayid} with $\m=m$. Hence the induction step is established and 
the proposition is proved. 

\end{proof}

\subsection{Proof of the  theorem}

Now we complete the proof of Theorem~\ref{thm:algE}.

\begin{proof}[Proof of Theorem~\ref{thm:algE}]
For any subset $E \in \realR$ and $s \in \compC$, 
consider the new weight function $W_{E,s}(x) := W(x)(1 - s\chi_E(x))$.
Let $Z^{E,s}_{\ddd}(a_1, \cdots, a_{\ddd})$ be the partition function~\eqref{eq:formula_of_general_partition_function0} with $W$ replaced by $W_{E,s}$.
We also use a similar short-hand notation as~\eqref{eq:short_hand_for_partition_function}.
Then 
\begin{equation} \label{eq:ratio_expression_of_E_W_s}
  \fE_{\ddd}(a_1, \cdots, a_{\m};E; s) = \frac{Z^{E,s}_{\ddd}(a_1, \cdots, a_{\m})}{Z_{\ddd}(a_1, \cdots, a_{\m})}.
\end{equation}

Taking $\vect = (0,0, \cdots)$ in~\eqref{eq:Fayid} and recalling~\eqref{eq:equivalence_of_partition_functions}, 
we find 
\begin{equation} \label{eq:determinantal_formula_for_Z_W}
\frac{Z_{\ddd}(a_1, \cdots, a_{\m})}{Z_{\ddd}} = \frac{1}{\Delta_m(a_1, \cdots, a_{\m})} \det \left[ a^{\m-j}_k \frac{Z_{\ddd+1-j}(a_k)}{Z_{\ddd+1-j}} \right]^{\m}_{j,k=1}.
\end{equation}
Note that this holds for any weight function $W$. 
We substitute $W\mapsto W_{E,s}$ in~\eqref{eq:determinantal_formula_for_Z_W} and divide this identity by~\eqref{eq:determinantal_formula_for_Z_W} with $W$.  From this we obtain 
\begin{equation} \label{eq:relative_rank_m_prob_represented_by_rank_1_Z}
  \bar{\fE}_{\ddd}(a_1, \cdots, a_{\m}; E; s) 
  = \frac{\det \left[ a^{\m-j}_k Z_{ d+1-j}(a_k) \bar{\fE}_{d+1-j}(a_k; E; s) \right]^{\m}_{j,k=1}}{\det \left[ a^{\m-j}_k Z_{ d+1-j}(a_k) \right]^{\m}_{j,k=1}}.
\end{equation}

We now consider the terms $a^{\m-j}_k Z_{ d+1-j}(a_k)$. For any dimension $l$, if $a_l = a$ and $a_1 = \cdots = a_{l-1} = 0$, applying \lHopital's rule to \eqref{eq:formula_of_general_partition_function}, we have a formula of the partition function $Z_l(a)$: 
\begin{equation}
  \begin{split}
    Z_l(a) = {}& \frac{C_l}{a^{l-1} \prod_{j=0}^{l-2} j!} \int_{\realR^l}
	\det\big[ V \big]
    	\det\big[  \lambda_{i}^{j-1} \big]
    \prod^l_{j=1} W(\lambda_j) d\lambda_j
  \end{split}
\end{equation}
where $V=\big( V_{ij} \big)^l_{i,j = 1}$, with $V_{ij}= \lambda_i^{j-1}$ for $j=1, \cdots, l-1$ and $V_{il} = e^{a\lambda_i}$. 
Let $p_j$ be orthonormal polynomials with respect $W(x)dx$. 
By using elementary row operations, 
\begin{equation}
  \begin{split}
    Z_l(a) 
    = {}& \frac{C'_l}{a^{l-1}} \int_{\realR^l}
    \det\big[ \tilde{V} \big] 
    \det\big[  p_{j-1}(\lambda_{i}) \big]
    \prod^l_{j=1} W(\lambda_j) d\lambda_j.
  \end{split}
\end{equation}
where $\tilde{V}=\big( \tilde{V}_{ij} \big)^l_{i,j = 1}$, with $\tilde{V}_{ij}= p_{j-1}(\lambda_i)$ for $j=1, \cdots, l-1$ and $\tilde{V}_{il} = e^{a\lambda_i}$, and  $C_l'$ is a new constant which depends only on $l$ and $W$.
Using the \Andreief's formula and the fact that $p_j$ are orthonormal polynomials, we obtain 
\begin{equation}
  \begin{split}
    Z_l(a) 
    = \frac{l! C'_l}{a^{l-1}} \int_{\R} e^{a\lambda} p_{l-1}(\lambda) W(\lambda) d\lambda
    = l! C_k' a^{-l+1} \bfGamma_{l-1}(a).
  \end{split}
\end{equation}
Inserting this into \eqref{eq:relative_rank_m_prob_represented_by_rank_1_Z}, we obtain Theorem \ref{thm:algE}. 
\end{proof}

% \bibliographystyle{abbrv}
% \bibliography{bibliography}

\end{document}